\newtheorem{prop}{Proposition}[section]
\newtheorem{thm}[prop]{Theorem}
\newtheorem{lemma}[prop]{Lemma}
\newtheorem{cor}[prop]{Corollary}
\begin{document}
\begin{center}
{\large \bf Some Notes on Pairs in Binary Strings}\\
Jeremy M. Dover
\end{center}
\begin{abstract}
Seth~\cite{1812699} posed a problem that is equivalent to the following: how many binary strings of length $n$ have exactly $k$ pairs of consecutive 0s and exactly
$m$ pairs of consecutive 1s, where the first and last bits are considered as being consecutive? In this paper, we provide a closed form solution which also solves a
related problem with some interesting connections to other combinatorial sequences.
\end{abstract}

\section{The Setup}
Seth~\cite{1812699} posed the following problem: consider a microstate consisting of 8 spins, where a microstate is a linear ordering of spins, each of which may be in the up or down state. Seth asks how many of these 8-spin microstates have exactly 2 ``up parallel pairs" and 2 ``down parallel pairs", where an ``up parallel pair" is two consecutive up states, and the obvious meaning of consecutive for the linear ordering is extended so that the first and last states are also considered consecutive. A ``down parallel pair" is defined analogously. Note that despite the first and last states being considered consecutive, the microstate is still considered to have a first and last state, so rotations of the state pattern are counted as being different.

It is not hard to cast this problem into a question about binary strings, where an ``up spin" is a 0, and a ``down spin" a 1, namely finding the cardinality of the set $S^\circ(n,k,m)$, the set of all binary strings of length $n$ with $k$ pairs of consecutive 0s and $m$ pairs of consecutive 1s, with the first and last bits considered consecutive. In order to address this problem, we define the related set $Z(n,k,m)$ to be the number of binary sequences of length $n$ that start with 0 and have $k$ pairs of consecutive zeroes, and $m$ pairs of consecutive ones, where the first and last bits are {\em not} considered consecutive. We denote the cardinality of $Z(n,k,m)$ as $z(n,k,m)$. We now show that $\left|S^\circ(n,k,m)\right|$ can be determined in terms of the values $z(n,k,m)$. In what follows, we will assume unless otherwise stated that the first and last bits of a binary string are not considered consecutive. For brevity, we refer to a pair of consecutive 0s (resp. 1s) in a binary string as a 0-pair (resp. 1-pair); this terminology will specifically not be used for the first and last bits, in those cases where they are considered consecutive.

The first issue to address is that some of the binary strings in $S^\circ(n,k,m)$ begin with 1. However, we note that the operation of inverting each bit of a binary string of length $n$ is obviously a bijective involution from the set of all binary strings of length $n$ onto itself, and shows that the number of binary strings that start with a 1 and have $k$ 0-pairs and $m$ 1-pairs is exactly $z(n,m,k)$.

We know that the elements of $Z(n,k,m)$ begin with 0, but we do not necessarily know how they end, which is an important consideration when analyzing $S^\circ(n,k,m)$. The following lemma provides an answer to this question.

\begin{lemma}
\label{end}
Let $n,k,m$ be integers such that $n \ge 1 $ and $k,m \ge 0$, and let $b$ be a binary string of length $n$ containing $k$ 0-pairs and $m$ 1-pairs. Then the last bit of $b$ is the same as the first bit of $b$ if and only if $n+k+m$ is odd.
\end{lemma}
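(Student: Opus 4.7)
The plan is to prove this via a run-length decomposition of $b$. Write $b$ as a concatenation of maximal runs of identical bits; let $r$ denote the number of such runs, and let $\ell_1, \ldots, \ell_r$ be their lengths. Since the runs partition the positions of $b$, we have $\ell_1 + \cdots + \ell_r = n$.

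Next I would count consecutive same-bit pairs in terms of the $\ell_i$. A run of length $\ell_i$ contributes exactly $\ell_i - 1$ pairs of consecutive equal bits (all of which are 0-pairs or all 1-pairs depending on the run's bit), and pairs spanning the boundary between two adjacent runs are by definition not same-bit pairs. Summing over runs gives a total of
\[
k + m \;=\; \sum_{i=1}^{r}(\ell_i - 1) \;=\; n - r,
\]
so $r = n - k - m$.

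Because the runs are maximal, consecutive runs have opposite bits, so the sequence of run-bits alternates. Therefore the first bit of $b$ equals the last bit of $b$ if and only if the number of runs $r$ is odd. Substituting $r = n - k - m$, this happens exactly when $n - k - m$ is odd; and since $n - k - m$ and $n + k + m$ differ by the even quantity $2(k+m)$, this is equivalent to $n + k + m$ being odd, as desired.

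I do not anticipate a real obstacle here: the only thing to be careful about is the bookkeeping in step two, specifically that boundary-crossing pairs contribute nothing to $k$ or $m$, which is immediate from the maximality of the runs. The argument works uniformly for all $n \ge 1$ and $k, m \ge 0$, including the degenerate cases (e.g., $r = 1$, which corresponds to a constant string and forces $k + m = n - 1$).
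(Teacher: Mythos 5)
Your proof is correct and is essentially the paper's argument in dual form: the paper counts the $n-1-k-m$ positions where consecutive bits differ (its ``D''s) and notes the last bit flips once per such position, while you count the $r = n-k-m$ maximal runs, and these are the same object since the D positions are exactly the $r-1$ run boundaries. Both reduce to the same parity computation, so no substantive difference.
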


\begin{proof}
Given a binary string $b$ of length $n$, assign to each pair of consecutive bits (of which there are $n-1$) the letter S if they are the same, and D if they are different. Since $b$ has $k$ 0-pairs and $m$ 1-pairs, there are exactly $m+k$ Ss, and thus there are $n-1-m-k$ Ds. Reading from left to right, we only change values in the string when we encounter a D, so it is easy to see that the last bit of the string depends only on the parity of $n-1-m-k$. If this value is odd, then the last bit of $b$ is different from the first bit, while these bits will be the same if the number of Ds is even. Since $n+m+k$ and $n-1-m-k$ have opposite parity, we obtain the result.
\end{proof}

Let's use these facts to determine $\left|S^\circ(n,k,m)\right|$. Let $b \in S^\circ(n,k,m)$ be a binary string. If the first and last bits of $b$ are different, then by Lemma~\ref{end} we must have $n+k+m$ even, since $b$ has $k$ 0-pairs and $m$ 1-pairs. If the first and last bits of $b$ are the same, then $b$ has either $k-1$ 0-pairs and $m$ 1-pairs, or $k$ 0-pairs and $m-1$ 1-pairs; in either case Lemma~\ref{end} shows that $n+k+m-1$ must be odd, or $n+k+m$ is even. This shows that if $n+k+m$ is odd, then there are {\em no} binary strings in $S^\circ(n,k,m)$. But if $n+k+m$ is even, then all of the binary strings in $Z(n,k,m)$ and $Z(n,k-1,m)$ are in $S^\circ(n,k,m)$, as are the inversed of the string in $Z(n,m,k)$ and $Z(n,m-1,k)$. Thus we have:
$$S^\circ(n,k,m) = \begin{cases} 0 &\mbox n+k+m\,{\rm odd}\\z(n,k,m)+z(n,k-1,m)+\\z(n,m,k)+z(n,m-1,k) & \mbox n+k+m\,{\rm even}\end{cases}$$

The important takeaway from this section is that our original problem can be solved strictly by consideration of the numbers $z(n,k,m)$, which we focus on exclusively in what follows.

\section{Some recurrence relations for $z(n,k,m)$}

The boundary conditions for $z(n,k,m)$ are fairly straightforward; for convenience we define $z(n,k,m) = 0$ for all $n < 0$, $k < 0$ or $m < 0$. Noting that a binary string of length $n$ only has $n-1$ pairs of consecutive bits, we know that $z(n,k,m) = 0$ for all integers $k,m$ such that $k+m \ge n$. Moreover, if $k+m = n-1$, our numbers $z(n,k,m)$ count the number of strings where all pairs of consecutive bits are identical, with the first bit of the string being 0. This forces the string to be entirely 0s, showing that for integers $k,m$ with $k+m = n-1$, $z(n,k,m) = 0$ unless $k=n-1$ and $m=0$, in which case $z(n,n-1,0) = 1$.

We now derive several different recurrence relations for the $z(n,k,m)$, which each have different uses.

\begin{thm}
\label{recur1}
Let $n,k,m$ be integers such that $n \ge 3$ and $k,m \ge 0$. Then $z(n,k,m) = z(n-1,k-1,m) + z(n-2,k,m) + z(n-2,m-1,k)$.
\end{thm}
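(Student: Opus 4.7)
The plan is to prove the recurrence by conditioning on the first two bits of the string, which is natural since $z(n,k,m)$ only counts strings that begin with $0$. Write a generic element of $Z(n,k,m)$ as $b = b_1 b_2 \ldots b_n$ with $b_1 = 0$, and split into cases based on $b_2$.

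If $b_2 = 0$, then $(b_1,b_2)$ contributes one $0$-pair, and the suffix $b_2 b_3 \ldots b_n$ is a binary string of length $n-1$ beginning with $0$, containing exactly $k-1$ pairs of consecutive $0$s and $m$ pairs of consecutive $1$s. These suffixes are in bijection with the elements of $Z(n-1,k-1,m)$, contributing $z(n-1,k-1,m)$. If $b_2 = 1$, then the pair $(b_1,b_2) = (0,1)$ contributes no $0$-pair or $1$-pair, so I further condition on $b_3$. When $b_3 = 0$, neither $(b_1,b_2)$ nor $(b_2,b_3)$ is a $0$-pair or $1$-pair, and the suffix $b_3 \ldots b_n$ has length $n-2$, begins with $0$, and carries all of the original $k$ $0$-pairs and $m$ $1$-pairs; this yields $z(n-2,k,m)$.

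The one subtle case is $b_2 = 1$, $b_3 = 1$: now $(b_2,b_3)$ is a $1$-pair, so the suffix $b_3 \ldots b_n$ has length $n-2$, begins with $1$, and contains $k$ $0$-pairs and $m-1$ $1$-pairs. Since $Z$ only counts strings starting with $0$, I invoke the bit-inversion involution (already noted in the preceding discussion of $S^\circ(n,k,m)$): flipping every bit gives a bijection between length-$(n-2)$ strings starting with $1$ having $k$ $0$-pairs and $m-1$ $1$-pairs, and length-$(n-2)$ strings starting with $0$ having $m-1$ $0$-pairs and $k$ $1$-pairs. Thus this subcase contributes $z(n-2,m-1,k)$.

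Summing over the disjoint cases yields the claimed identity. The only real obstacle is the bookkeeping for the third case; I will also briefly check that the boundary conventions $z(n,k,m) = 0$ for negative arguments are consistent with the cases where $k=0$ or $m=0$ (so that e.g.\ $z(n-1,k-1,m) = 0$ when $k=0$ correctly records the absence of a leading $00$), and that the hypothesis $n \ge 3$ is exactly what is needed so that the suffix lengths $n-1$ and $n-2$ are at least $1$, making the case analysis on $b_3$ meaningful.
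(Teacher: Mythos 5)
Your proof is correct and follows essentially the same route as the paper: both decompose $Z(n,k,m)$ according to whether the string begins with $00$, $010$, or $011$, and both handle the third case by counting length-$(n-2)$ strings starting with $1$ via the bit-inversion bijection to get $z(n-2,m-1,k)$. Your explicit attention to the degenerate boundary cases ($k=0$ or $m=0$) matches the remark the paper makes at the end of its own proof.
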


\begin{proof}
To prove this result, we count the size of $Z(n,k,m)$ in two ways, one of which is $z(n,k,m)$ by definition. For the other count, let $b \in Z(n,k,m)$ and consider three cases:
\begin{enumerate}
\item If $b$ starts with 00, then $b$ consists of a 0 followed by a string of $n-1$ bits starting with 0 with $k-1$ 0-pairs and $m$ 1-pairs, of which there are $z(n-1,k-1,m)$.
\item If $b$ starts with 010, then $b$ consists of 01 followed by a string of $n-2$ bits starting with 0 with $k$ 0-pairs and $m$ 1-pairs, of which there are $z(n-2,k,m)$.
\item If $b$ starts with 011, then $b$ consists of 01 followed by a string of $n-2$ bits starting with 1 with $k$ 0-pairs and $m-1$ 1-pairs, of which there are $z(n-2,m-1,k)$.
\end{enumerate}
Note that if $m=0$, no strings in $Z(n,k,m)$ start with 011, but $z(n-2,-1,k)$ is defined to be 0, so this remains correct. Since these three cases count sets that form a disjoint union of $Z(n,k,m)$, we have the result.
\end{proof}

Our next recurrence is somewhat more complicated and is not universally applicable, but it counts in a way which quickly reveals an important corollary.

\begin{thm}
\label{recur2}
Let $n,k,m$ be integers such that $n \ge 1$ and $k+m < n-1$. Then $$z(n,k,m) = \displaystyle \sum_{f=1}^{k+1} z(n-f,m,k+1-f)$$
\end{thm}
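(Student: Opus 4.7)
The plan is to partition $Z(n,k,m)$ according to the length $f$ of the initial run of $0$s, and then use the bit-flipping bijection mentioned earlier in the paper to rewrite each piece in the form $z(n-f, m, k+1-f)$.

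First I would observe that every $b \in Z(n,k,m)$ begins with $0$, so it has a well-defined initial run of $0$s of some length $f \ge 1$. Because the hypothesis gives $k+m < n-1$, the string cannot consist entirely of $0$s (this would require $k = n-1$ and $m = 0$), so the initial run is strictly shorter than $n$ and is necessarily followed by a $1$. The initial block of $f$ zeros contributes exactly $f-1$ $0$-pairs, and the bit between the final $0$ of the initial run and the following $1$ contributes a ``D'' rather than an $S$, so no pair is used up at the boundary.

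Next I would reinterpret the remaining suffix. After stripping the initial run, what is left is a binary string of length $n-f$ that starts with $1$ and has $k - (f-1) = k+1-f$ $0$-pairs and $m$ $1$-pairs. The number of such strings is, by the bit-inversion involution used at the beginning of the paper (which swaps $0$s and $1$s, and hence swaps the roles of $k$ and $m$), equal to $z(n-f, m, k+1-f)$. The allowable values of $f$ are precisely $1 \le f \le k+1$: we need $f \ge 1$ (at least one initial $0$) and $k+1-f \ge 0$ (no negative $0$-pair count), and the hypothesis $k+m < n-1$ ensures $n-f \ge n-k-1 > m \ge 0$, so the remaining length is valid. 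Summing over $f$ and noting that these cases are disjoint and exhaust $Z(n,k,m)$ yields the stated formula.

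The main obstacle, though minor, is bookkeeping around the boundary cases $f = 1$ and $f = k+1$ and a careful justification that the hypothesis $k+m < n-1$ is exactly what rules out the degenerate all-zero string and guarantees $n-f \ge 1$ throughout the sum. Once these edge conditions are handled, the argument is a direct disjoint-union count followed by one application of the inversion bijection.
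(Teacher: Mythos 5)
Your proposal is correct and matches the paper's proof: both partition $Z(n,k,m)$ by the position of the first 1 (equivalently, the length $f$ of the initial run of 0s), count the suffix via the bit-inversion bijection as $z(n-f,m,k+1-f)$, and truncate the sum at $f=k+1$ because longer initial runs would force more than $k$ 0-pairs. Your handling of the edge conditions ($k+m<n-1$ guaranteeing a 1 exists and $n-f\ge 1$) is the same justification the paper gives, just spelled out slightly more explicitly.
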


\begin{proof}
Again we proceed by counting the cardinality of $Z(n,k,m)$ in two ways, the first yielding $z(n,k,m)$. To count this set in another way, we note that since $k+m < n-1$, any $b \in Z(n,k,m)$ must contain at least one 1. Let $f$ be the position of the first 1 in $b$, so $f$ may vary between $1$ and $n-1$ (note: the first bit of $b$ has index 0). Prior to the first 1, the string consists entirely of 0s, creating $f-1$ pairs of consecutive 0s. Starting from the first 1, the remainder of the string is a binary string of length $n-f$ starting with 1 that contains exactly $k+1-f$ 0-pairs and $m$ 1-pairs, of which there are $z(n-f,m,k+1-f)$ such strings. Thus we can write $$z(n,k,m) = \displaystyle \sum_{f=1}^{n-1} z(n-f,m,k+1-f)$$ Noting that $z(n-f,m,k+1-f) = 0$ for all $f > {\rm min}\{n,k+1\}$ and that $k+1 < n$ yields the indices of summation shown in the Theorem statement.
\end{proof}

\begin{cor}
\label{zeroflip}
Let $n,m$ be integers with $n \ge 1$ and $0 \le m < n-1$. Then $z(n,0,m) = z(n-1,m,0)$.
\end{cor}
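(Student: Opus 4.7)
The plan is to observe that Corollary~\ref{zeroflip} is essentially a one-line specialization of Theorem~\ref{recur2}, so almost no new work is required. First I would verify that the hypotheses match: the corollary assumes $n \ge 1$ and $0 \le m < n-1$, and applying Theorem~\ref{recur2} with $k=0$ requires $n \ge 1$ and $k + m = m < n-1$, which is exactly the same condition.

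Next I would substitute $k=0$ into the recurrence
$$z(n,k,m) = \sum_{f=1}^{k+1} z(n-f, m, k+1-f).$$
The upper limit becomes $k+1 = 1$, so the sum collapses to a single term corresponding to $f=1$, namely $z(n-1, m, 0)$. This immediately yields $z(n,0,m) = z(n-1,m,0)$.

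There is no real obstacle here, since Theorem~\ref{recur2} has already done the combinatorial heavy lifting. The only thing worth remarking on is the combinatorial interpretation that makes the identity transparent: a string in $Z(n,0,m)$ has no 0-pairs and starts with $0$, so its first bit must be an isolated $0$ followed by a $1$; stripping that leading $0$ leaves a string of length $n-1$ beginning with $1$ that has $m$ 1-pairs and no 0-pairs, and such strings are counted by $z(n-1,m,0)$ (after applying the bit-inversion bijection implicit in the definition of $z$, which swaps the roles of the two parameters when the starting bit is flipped). I would probably include this one-line combinatorial gloss after the formal deduction to motivate the name \emph{zeroflip}, but the proof itself is just the substitution above.
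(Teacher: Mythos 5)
Your proof is correct and is essentially identical to the paper's: both simply set $k=0$ in Theorem~\ref{recur2}, collapse the sum to its single $f=1$ term, and read off $z(n,0,m)=z(n-1,m,0)$. The added combinatorial gloss about stripping the isolated leading $0$ is a nice (and accurate) remark but not part of the paper's argument.
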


\begin{proof}
Since $m < n-1$, we can apply Theorem~\ref{recur2} to $z(n,0,m)$ to obtain $z(n,0,m) = \sum_{f=1}^{1} z(n-f,m,1-f)$. Evaluating the single term of the summation yields the result.
\end{proof}

Our final recurrence is nice because it shows how we can reduce the problem of calculating $z(n,k,m)$ to just those values with $m=0$.

\begin{thm}
\label{generalm}
Let $n,k,m$ be integers such that $n \ge 3$, $0 \le k \le n-1$, $0 < m \le n-1-k$. Then
\footnotesize
$$z(n,k,m) = \begin{cases}\displaystyle\sum_{f=1}^m {{k+f}\choose{f}}{{m-1}\choose{f-1}}z(n-m-f,k+f,0) &\mbox n+k+m\,{\rm odd}\\
\displaystyle\sum_{f=1}^m  {{k+f-1}\choose{f-1}}{{m-1}\choose{f-1}}z(n-m-f,k+f-1,0)+\\
\displaystyle\sum_{f=1}^m {{k+f}\choose{f}}{{m-1}\choose{f-1}}z(n-m-f,k+f,0) &\mbox n+k+m\,{\rm even}\end{cases}$$
\normalsize
\end{thm}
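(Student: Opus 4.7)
My plan is to enumerate strings in $Z(n,k,m)$ by the number $f$ of their \emph{long} 1-runs (runs of 1s of length $\geq 2$), as opposed to the \emph{short} 1-runs (length exactly $1$). Every 1-pair sits inside some long 1-run, so if $b$ has $f$ long 1-runs of lengths $l_1,\dots,l_f$ then $\sum(l_i-1)=m$ with each $l_i-1 \geq 1$; hence $1 \leq f \leq m$ and the length sequence forms a composition of $m$ into $f$ positive parts, contributing a factor of $\binom{m-1}{f-1}$.

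Next I would define a collapse map $\phi$ on $Z(n,k,m)$ that simultaneously deletes every long 1-run. Since strings in $Z$ begin with $0$, no long 1-run can occupy position $1$, so each is either \emph{interior} (flanked by two 0-runs, whose merger raises the 0-pair count by one) or \emph{terminal} (at the string's end, whose removal simply truncates without changing the 0-pair count). Invoking Lemma~\ref{end}: if $n+k+m$ is odd then $b$ ends in $0$, every long 1-run is interior, and $\phi(b)\in Z(n-m-f,k+f,0)$. If $n+k+m$ is even then $b$ ends in $1$, and I split on the terminal 1-run: if it is short then all $f$ long 1-runs stay interior and $\phi(b)\in Z(n-m-f,k+f,0)$; if it is long then only $f-1$ are interior, so $\phi(b)\in Z(n-m-f,k+f-1,0)$.

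To close the argument I count preimages of a fixed $b^*$ under $\phi$: each preimage is specified by a composition of the long 1-run lengths ($\binom{m-1}{f-1}$ choices) together with their reinsertion sites. A 0-run of length $a$ in $b^*$ offers $a-1$ positions between consecutive 0s at which a long 1-run may be inserted to split that run; summed over all 0-runs of $b^*$, this count equals the 0-pair count of $b^*$. In the all-interior case the $f$ long 1-runs go into $f$ distinct such positions of $b^*\in Z(n-m-f,k+f,0)$, yielding $\binom{k+f}{f}$ placements and the term $\binom{k+f}{f}\binom{m-1}{f-1}\,z(n-m-f,k+f,0)$. In the terminal-long sub-case the first $f-1$ long 1-runs go into $f-1$ of the $k+f-1$ splittable positions of $b^*\in Z(n-m-f,k+f-1,0)$ in $\binom{k+f-1}{f-1}$ ways, and the last long 1-run is appended to $b^*$ uniquely, producing $\binom{k+f-1}{f-1}\binom{m-1}{f-1}\,z(n-m-f,k+f-1,0)$. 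Summing over $f=1,\dots,m$ recovers both cases of the theorem. The main obstacle I foresee is the bookkeeping: correctly applying Lemma~\ref{end} to fix the terminal bit, verifying the claimed length and 0-pair count of $\phi(b)$, and confirming that the splittable-position count agrees with the 0-pair count of $b^*$; once those checks are in hand the summation reorganizes itself.
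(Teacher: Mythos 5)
Your proposal is correct and follows essentially the same route as the paper: its map $\psi$ is exactly your collapse map $\phi$ deleting all 1-runs of length at least two, with the same composition factor $\binom{m-1}{f-1}$, the same choice of insertion sites among the 0-pairs (all $f$ interior, versus $f-1$ interior plus the terminal slot), and the same appeal to Lemma~\ref{end} to decide when the terminal-long-run case contributes. The bookkeeping you flag at the end is precisely what the paper's proof carries out.
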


\begin{proof}
Define the mapping $\psi$ on a string $b \in Z(n,k,m)$ to be the string obtained by deleting from $b$ all substrings of consecutive 1s of length greater than 1; notice that this operation is well-defined, and that $\psi(b)$ is unique. Let $f$ be the number of substrings of 1s removed from $b$, so clearly $f$ is between 1 and $m$. The length of $\psi(b)$ must be $n-(m+f)$, since the removal of a substring of $g$ consecutive 1s removes only $g-1$ 1-pairs. Clearly $\psi(b)$ has no 1-pairs, since all such strings are removed, and no 1-pair can be created by our deletion process. Indeed, the removal of a string of consecutive 1s creates an additional 0-pair, unless that substring is removed from the end of the original string (it cannot come from the beginning since the string starts with a zero, by definition of $Z(n,k,m)$).

So to summarize, for any $b \in Z(n,k,m)$ ending in either 0 or 01, there exists a unique integer $1 \le f \le m$ and a unique binary string $\psi(b) \in Z(n-m-f,k+f,0)$ such that $b$ can be obtained from $\psi(b)$ by injecting a total of $m+f$ 1s into $f$ 0-pairs such that each injected substring contains at least two 1s. Also, for any $b \in Z(n,k,m)$ ending in 11, there exists a unique integer $1 \le f \le m$ and a unique binary string $\psi(b) \in Z(n-m-f,k+f-1,0)$ ending in 0 such that $b$ can be obtained from $\psi(b)$ by injecting a total of $m+f$ 1s into $f-1$ 0-pairs and at the end of $\psi(b)$ such that each injected substring contains at least two 1s.

To count the number of strings in $Z(n,k,m)$ that end in 0 or 01, we follow the recipe above. Given any $1 \le f \le m$, let $b' \in Z(n-m-f,k+f,0)$, for which there are $z(n-m-f,k+f,0)$ choices. We can pick any $f$ 0-pairs in $b'$, which can be done in ${k+f}\choose{f}$ ways. To inject our substrings of ones, since we know each substring must have at least two 1s, and the remaining $m-f$ 1s are ``identical balls" that need to be distributed into ``distinguishable urns", which can be done in ${m-1}\choose{f-1}$ ways. Each of the binary strings constructed this way is contained in $Z(n,k,m)$, and all are distinct as discussed above. Hence $Z(n,k,m)$ contains
$$\displaystyle\sum_{f=1}^m {{k+f}\choose{f}}{{m-1}\choose{f-1}}z(n-m-f,k+f,0)$$
strings ending in either 0 or 01.

To count the number of strings in $Z(n,k,m)$ that end in 11, we proceed as before. Given any $1 \le f \le m$, we pick $b' \in Z(n-m-f,k+f-1,0)$ ending in zero. From Lemma~\ref{end}, the number of possibilities for $b'$ is 0 if $n-m-f+k+f-1 = n-m+k-1$ is even, or equivalently if $n+k+m$ is odd. However, if $n+k+m$ is even, every string in $Z(n-m-f,k+f-1,0)$ ends in 0, so there are $z(n-m-f,k+f-1,0)$ choices for $b'$. Then from the $k+f-1$ 0-pairs in $b'$, we choose $f-1$ into which to inject 1s, which can be done in ${k+f-1}\choose{f-1}$ ways. Finally, we can distribute the $m+f$ 1s we need to add between these $f-1$ 0-pairs and at the end of $b'$, such that each injected string has at least two 1s, in ${m-1}\choose{f-1}$ ways, exactly as above. Therefore, if $n+k+m$ is odd, $Z(n,k,m)$ contains no strings ending in 11, while if $n+k+m$ is even, $Z(n,k,m)$ contains:
$$\displaystyle\sum_{f=1}^m  {{k+f-1}\choose{f-1}}{{m-1}\choose{f-1}}z(n-m-f,k+f-1,0)$$
strings ending in 11. This proves the result.
\end{proof}

\section{The case $m=0$ and Terquem's problem}

Theorem~\ref{generalm} reduces our problem of computing $z(n,k,m)$ to the values of $z(n,k,0)$, but does not help us find these values. Fortunately, when analyzing computational data for $z(n,k,m)$, we searched the OEIS~\cite{oeis} for the case $m=0$, which yielded an unexpected connection with the sequence A046854. This sequence, which we call $T(n,k)$, represents a triangle of numbers with $n \ge 0$, $0 \le k \le n-1$ defined via
$$T(n,k) = {{\left \lfloor \frac{n+k}{2} \right \rfloor}\choose {k}}$$
Combinatorially, this sequence arises as a solution to Terquem's problem~\cite{riordan}, namely providing the number of length $k$, increasing sequences of integers from $\{1 \ldots n\}$ that alternate parity and start with an odd number, as well as the number of length $k$, increasing sequences of integers from $\{1 \ldots n+1\}$ that alternate parity and start with an even number.

We can verify this numerical relationship with a quick induction proof:
\begin{thm}
\label{terquem}
Let $n,k$ be integers such that $n > 0$ and $0 \le k \le n-1$. Then $z(n,k,0) = T(n-1,k) = {{\left \lfloor \frac{n+k-1}{2} \right \rfloor}\choose {k}}$.
\end{thm}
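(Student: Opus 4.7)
The plan is to induct on $n$ using the $m=0$ specialization of Theorem~\ref{recur1}. Setting $m=0$ in that recurrence kills the third term, since $z(n-2,-1,k)=0$ by the boundary convention, so we obtain the clean Pascal-like recursion
$$z(n,k,0) = z(n-1,k-1,0) + z(n-2,k,0).$$
The strategy is then to plug in the claimed closed form on the right-hand side, observe that both binomial coefficients share the same upper index, and apply Pascal's identity to collapse the sum into the target formula.

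First I would dispose of the base cases $n=1$ and $n=2$. For $n=1$ the only admissible $k$ is $0$, and $z(1,0,0)=1=\binom{0}{0}$. For $n=2$ direct enumeration gives $z(2,0,0)=1$ (the string $01$) and $z(2,1,0)=1$ (the string $00$), matching $\binom{0}{0}$ and $\binom{1}{1}$ respectively. For $n\ge 3$ and $0\le k\le n-1$ I would then apply the recurrence above, noting that if $k=0$ the first term is $z(n-1,-1,0)=0$ (and $\binom{N}{-1}=0$ harmlessly), while if $k=n-1$ the second term is $z(n-2,n-1,0)=0$ (matching $\binom{n-2}{n-1}=0$), so no special case-splitting by $k$ is actually required.

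By induction the right-hand side equals
$$\binom{\lfloor (n+k-3)/2 \rfloor}{k-1} + \binom{\lfloor (n+k-3)/2 \rfloor}{k}.$$
Setting $N := \lfloor (n+k-3)/2 \rfloor$, Pascal's identity collapses this to $\binom{N+1}{k}$, and it remains only to verify the indexing identity $N+1 = \lfloor (n+k-1)/2 \rfloor$. This is the one place requiring a routine parity split: if $n+k$ is odd then both floors evaluate to $(n+k-3)/2$ and $(n+k-1)/2$ exactly, differing by $1$; if $n+k$ is even then $\lfloor (n+k-3)/2 \rfloor = (n+k-4)/2$ and $\lfloor (n+k-1)/2 \rfloor = (n+k-2)/2$, again differing by $1$.

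The only real obstacle is bookkeeping at the edges of the triangle, i.e., confirming that the boundary conventions $z(n-1,-1,0)=0$ and $z(n-2,n-1,0)=0$ are matched by the binomial formula for $k=0$ and $k=n-1$. Since the claimed formula $\binom{\lfloor (n+k-1)/2\rfloor}{k}$ automatically vanishes whenever $k$ exceeds its upper index, this compatibility is immediate and the induction goes through uniformly in $k$.
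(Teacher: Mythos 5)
Your proposal is correct and follows essentially the same route as the paper: induct via the $m=0$ specialization of Theorem~\ref{recur1} (whose third term vanishes by the $z(\cdot,-1,\cdot)=0$ convention) and collapse the two resulting binomial coefficients with Pascal's identity. Your treatment of the floor-function indexing and of the boundary cases $k=0$ and $k=n-1$ is, if anything, slightly more explicit than the paper's.
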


\begin{proof}
We proceed via strong induction. It is easy to calculate $z(n,k,0)$ for small values of $n$ via enumeration:
\begin{description}
\item[$n=1$] $Z(1,0,0) = \{0\}$, $z(1,0,0) = T(0,0) = 1$
\item[$n=2$] $Z(2,0,0) = \{01\}$, $z(2,0,0) = 1$; $Z(2,1,0) = \{00\}$, $z(2,1,0)  = 1$
\item[$n=3$] $Z(3,0,0) = \{010\}$, $Z(3,1,0) = \{001\}$, $Z(3,2,0) = \{000\}$, \\$Z(3,0,1) =\{011\}$
\end{description}

By way of induction, assume $z(n,k,0) = {{\left \lfloor \frac{n+k-1}{2} \right \rfloor}\choose {k}}$ for all $n < N$, $0 \le k \le n-1$, and consider $z(N,k,0)$. By Theorem~\ref{recur1}, we have
\begin{eqnarray*}
z(N,k,0) &=& z(N-1,k-1,0) +z(N-2,k,0) + z(N-2,-1,k)\\
& = & z(N-1,k-1,0) +z(N-2,k,0) \\
\end{eqnarray*}
Using our induction hypothesis, we have $z(N,k,0) = {{\left \lfloor \frac{N+k-3}{2} \right \rfloor}\choose {k-1}} + {{\left \lfloor \frac{N+k-3}{2} \right \rfloor}\choose {k}}$. A simple application of Pascal's identity yields the result.
\end{proof}

Interestingly, it is possible to generate a bijection between all of the binary strings in $Z(n,k,0)$ and Terquem's sequences. A rigorous proof of this fact is tedious, but beginning with $b \in Z(n,k,0)$, define $t$ to be the sequence of positions of the first 0 in each 0-pair, where the first position in $b$ is position 1. The key is to note that to be in $Z(n,k,0)$, $b$ basically consists of runs of two or more 0s, with alternating strings of 0s and 1s between them; this alternation forces the first element of $t$ to be odd, as well as alternate parity thereafter. As an example:
$$001010001010001 \rightarrow 1,6,7,12,13$$

\bibliography{bibfile}{}
\bibliographystyle{plain}
\end{document}